      \newtheorem{theorem}{Theorem}[section]
      \newtheorem{remark}[theorem]{Remark}
      \newcommand{\ct}[1]{\langle {#1}\rangle \lower.3ex\hbox{$_{t}$}}
      \newcommand{\lt}[1]{[ {#1}] \lower.3ex\hbox{$_{t}$}}
\begin{document}

\title[$p$-capacity vs surface-area]{$p$-capacity vs surface-area}

\author{J. Xiao}
%    Address of record for the research reported here
\address{Department of Mathematics and Statistics, Memorial University, St. John's, NL A1C 5S7, Canada}
\email{jxiao@mun.ca}
\thanks{This project was in part supported by MUN's University Research Professorship (208227463102000) and NSERC of Canada.}

%    General info
\subjclass[2010]{80A20, 74G65, 53A30, 31B15}

\date{}

%\dedicatory{In memory of ...}

\keywords{}

\begin{abstract}
This paper is devoted to exploring the relationship between the $[1,n)\ni p$-capacity and the surface-area in $\mathbb R^{n\ge 2}$ which especially shows: if $\Omega\subset\mathbb R^n$ is a convex, compact, smooth set with its interior $\Omega^\circ\not=\emptyset$ and the mean curvature $H(\partial\Omega,\cdot)>0$ of its boundary $\partial\Omega$ then
$$
\left(\frac{n(p-1)}{p(n-1)}\right)^{p-1}\le\frac{\left(\frac{\hbox{cap}_p(\Omega)}{\big(\frac{p-1}{n-p}\big)^{1-p}\sigma_{n-1}}\right)}{\left(\frac{\hbox{area}(\partial\Omega)}{\sigma_{n-1}}\right)^\frac{n-p}{n-1}}\le\left(\sqrt[n-1]{\int_{\partial\Omega}\big(H(\partial\Omega,\cdot)\big)^{n-1}\frac{d\sigma(\cdot)}{\sigma_{n-1}}}\right)^{p-1}\quad\forall\quad p\in (1,n)
$$
whose limits $1\leftarrow p\ \&\ p\rightarrow n$ imply
$$
1=\frac{cap_1(\Omega)}{\hbox{area}(\partial\Omega)}\ \ \& \  \int_{\partial\Omega}\big(H(\partial\Omega,\cdot)\big)^{n-1}\frac{d\sigma(\cdot)}{\sigma_{n-1}}\ge 1,
$$
thereby not only discovering that the new best known constant is roughly half as far from the one conjectured by P\'olya-Szeg\"o in \cite[(2)]{P} but also extending the P\'olya-Szeg\"o inequality in \cite[(5)]{P}, with both the conjecture and the inequality being stated for the electrostatic capacity of a convex solid in $\mathbb R^3$.  
\end{abstract}
\maketitle

%\tableofcontents
\section{Overview}\label{s0}
\setcounter{equation}{0}

Given a compact set $\Omega$ in the $2\le n$-dimensional Euclidean space $\mathbb R^n$ equipped with the standard volume and surface-area elements $d\nu$ and $d\sigma$. The variational $[1,n)\ni p$-capacity of $\Omega$ is defined by
$$
\hbox{cap}_p(\Omega)=\inf\left\{\int_{\mathbb R^n}|\nabla f|^p\,d\nu:\ \ f\in C_c^\infty(\mathbb R^n)\ \ \&\ \ f(x)\ge 1\ \ \forall\ \ x\in\Omega\right\},
$$
where $C^\infty_c(\mathbb R^n)$ is the class of all infinitely differentiable functions with compact support in $\mathbb R^n$. Equivalently, the above infimum can be taken over either all $f\in C^\infty_c(\mathbb R^n)$ with $f=1$ in a neighbourhood of $\Omega$, or all Lipschitz functions $u$ on $\mathbb R^n$ with $f=1$ in a neighbourhood of $\Omega$ (cf. \cite[pp. 27-28]{HKM}).

As a set function on compact subsets of $\mathbb R^n$, $\hbox{cap}_p(\cdot)$ enjoys the following basic properties (a) through (f) (cf. \cite[pp. 28-32]{HKM} and \cite[Lemma 2.2.5]{Maz}):

\begin{itemize}

\item[(a)] Boundarization -- if $\Omega$ is a compact subset of $\mathbb R^n$ with non-empty boundary $\partial\Omega$ then $$\hbox{cap}_p(\partial\Omega)=\hbox{cap}_p(\Omega).$$

\item[(b)] Monotonicity -- if $\Omega_1$ and $\Omega_2$ are compact subsets of $\mathbb R^n$ with $\Omega_1\subseteq\Omega_2$ then $$\hbox{cap}_p(\Omega_1)\le \hbox{cap}_p(\Omega_2).$$

\item[(c)] Continuity -- if $(\Omega_j)_{j=1}^\infty$ is a decreasing sequence of compact subsets of $\mathbb R^n$ then $$\hbox{cap}_p(\cap_{j=1}^\infty \Omega_j)=\lim_{j\to\infty}\hbox{cap}_p(\Omega_j).$$

\item[(d)] Ball capacity -- if $B(x,r)=\{y\in\mathbb R^n: |y-x|\le r\}$ and $\sigma_{n-1}$ is the surface area of the origin-centred unit ball $B(0,1)$ then $$\hbox{cap}_p\big(B(x,r)\big)=r^{n-p}\Big(\frac{p-1}{n-p}\Big)^{1-p}\sigma_{n-1}.$$

\item[(e)] Geometric endpoint -- if $\Omega$ is a compact subset of $\mathbb R^n$ and $\hbox{area}(\cdot)$ stands for the surface-area of a set in $\mathbb R^n$ then $$\hbox{cap}_1(\Omega)=\inf\big\{\hbox{area}(\partial\Lambda):\ \Omega\subset\Lambda\cup\partial \Lambda\ \hbox{with\ bound\ open}\ \Lambda\ \hbox{and\ smooth}\ \partial\Lambda\big\}.$$ 

\item[(f)] Physical interpretation -- if $\Omega$ is a compact subset of $\mathbb R^{n\ge 3}$, then $\hbox{cap}_2(\Omega)$ is the maximal charge which can be placed on $\Omega$ when the electrical potential of the vector field created by this charge is controlled by $1$, namely, 
$$
\hbox{cap}_2(\Omega)=\sup\Big\{\mu(\Omega):\ \hbox{measure}\ \mu\ \hbox{with}\ \hbox{supp}(\mu)\subseteq\Omega\ \&\  \int_{\mathbb R^n}|x-y|^{2-n}\,\frac{d\mu(y)}{(n-2)\sigma_{n-1}}\le 1\ \forall\ x\in\mathbb R^n\setminus \Omega\Big\}.
$$
\end{itemize}

Motivated by P\'olya's 1947 paper \cite{P} as well as (a)\&(e) above, this article stems from discovering the relationship between the $p$-capacity and the surface-area (via the mean curvature). The details for such a discovery are provided in \S\ref{s2}\&\S \ref{s3} whose summary is shown in the sequel:
 
 \begin{itemize}
 \item[(h)] Surface area to variational capacity (\S\ref{s2}) -- In Theorem \ref{t11} we use the convexity of level set of $(1,n)\ni p$-equilibrium potential and a minimizing technique to gain \eqref{eAV}, a sharp convexity type inequality, linking the normalized variational capacity, the normalized surface area and the normalized volume and consequently deriving that $\big(\frac{n(p-1)}{p(n-1)}\big)^{p-1}$ times $\big(\frac{n-p}{n-1}\big)$-th power of the normalized surface area is the asymptotically sharp lower bound of the normalized variational capacity, whence having half-solved \footnote{Namely, the new best known constant is roughly half as far from the conjectured one.} the P\'olya-Szeg\"o conjecture (for $\hbox{cap}_2(\cdot)$ in $\mathbb R^3$) that {\it of all convex bodies, with a given surface area, the circular disk has the minimum capacity.}; 
 
 \item[(i)] Variational capacity to surface area (\S\ref{s3}) -- In Theorem \ref{t21} we employ a level set formulation of the inverse mean curvature flow (generated by a kind of $1$-equilibrium potential) to achieve \eqref{e29}, a log-convexity type inequality involving the normalized variational capacity, the normalized surface area and the normalized Willmore functional for the mean curvature and consequently revealing that the product of both $\big(\frac{p-1}{n-1}\big)$-th power of the normalized Willmore functional for the mean curvature and $\big(\frac{n-p}{n-1}\big)$-th power of the normalized surface area is the optimal upper bound of the normalized variational capacity, thereby extending the P\'olya-Szeg\"o principle (for $\hbox{cap}_2(\cdot)$ in $\mathbb R^3$) that {\it unless the convex solid is a ball the capacity is less than the mean-curvature-radius}.
\end{itemize}

Naturally, a combination of \eqref{e14} in Theorem \ref{t11} and \eqref{e29e} in Theorem \ref{t21} derives that if $\Omega\subset\mathbb R^n$ is a convex, compact, smooth set with its interior $\Omega^c\not=\emptyset$ and the mean curvature $H(\partial\Omega,\cdot)>0$ of its boundary $\partial\Omega$ then
\begin{itemize}

\item[(j)] 
$$
\left(\frac{n(p-1)}{p(n-1)}\right)^{p-1}\le\frac{\left(\frac{\hbox{cap}_p(\Omega)}{\big(\frac{p-1}{n-p}\big)^{1-p}\sigma_{n-1}}\right)}{\left(\frac{\hbox{area}(\partial\Omega)}{\sigma_{n-1}}\right)^\frac{n-p}{n-1}}\le\left(\sqrt[n-1]{\int_{\partial\Omega}\big(H(\partial\Omega,\cdot)\big)^{n-1}\frac{d\sigma(\cdot)}{\sigma_{n-1}}}\right)^{p-1}\quad\forall\quad p\in (1,n)
$$
whose limiting cases  $1\leftarrow p\ \&\ p\rightarrow n$ surprisingly yield the extremal case of (e) (cf. \cite{LXZ}) and the Willmore inequality (cf. \cite{chen, RS, BM}) as seen below:
\item[(k)]
$$
1=\frac{cap_1(\Omega)}{\hbox{area}(\partial\Omega)}\ \ \& \ \int_{\partial\Omega}\big(H(\partial\Omega,\cdot)\big)^{n-1}\frac{d\sigma(\cdot)}{\sigma_{n-1}}\ge 1.
$$
\end{itemize}

\section{Surface-area to $p$-capacity}\label{s2}
\setcounter{equation}{0}

In \cite[p.12]{PS} (cf. \cite{P}) P\'olya-Szeg\"o conjectured that for any convex compact subset $\Omega$ of $\mathbb R^3$ one has

\begin{equation}
\label{e11}
{\hbox{cap}_2(\Omega)}\ge \left(4\sqrt{\frac2\pi}\right){\sqrt{\hbox{area}(\partial\Omega)}}
\end{equation}
with equality if and only if $\Omega$ is a two-dimensional disk in $\mathbb R^3$. Here it is perhaps worth pointing out that if $\Omega\subset\mathbb R^2$ then $\hbox{area}(\partial\Omega)$ is replaced by two times of the two-dimensional Lebesgue measure of $\Omega$.

The first remarkable result approaching the conjecture was obtained in P\'olya-Szeg\"o's 1951 monograph: \cite[p.165,(4)]{PS} (as a sequel to the work presented in their
1945 paper \cite{PSajm}) via suitable symmetrization and projection for any given convex compact set $\Omega\subset\mathbb R^3$: 

\begin{equation}
\label{e12}
{\hbox{cap}_2(\Omega)}\ge\left(\frac{4}{\sqrt{\pi}}\right){\sqrt{\hbox{area}(\partial\Omega)}}.
\end{equation}
Since then, no improvement has been made on (\ref{e12}) and of course (\ref{e11}) has not yet been verified - see \cite{La, CFG, FGP, HPR} for an up-to-date report on this research. In the sequel, with the help of the isocapacitary inequality for the volume $\hbox{vol}(\cdot{})$ of a level set of the equilibrium potential of an arbitrary convex compact set $\Omega\subset\mathbb R^3$ we show 
\begin{equation}
\label{e13}
{\hbox{cap}_2(\Omega)}\ge\left(\frac{3\sqrt{\pi}}{2}\right){\sqrt{\hbox{area}(\partial\Omega)}},
\end{equation}
whence finding that (\ref{e13}) holds the nearly middle place between (\ref{e11}) and (\ref{e12}) in the sense of
$$
\begin{cases}
 4\sqrt{\frac2\pi}>\frac{3\sqrt{\pi}}{2}> \frac{4}{\sqrt{\pi}};\\
4\sqrt{\frac2\pi}-\frac{3\sqrt{\pi}}{2}=0.532857...;\\
\frac{3\sqrt{\pi}}{2}-\frac{4}{\sqrt{\pi}}=0.401922....
\end{cases}
$$

As a matter of fact, we discover the brand-new sharp convexity type inequality \eqref{eAV} (for the surface-area, the variational capacity and the volume) whose by-product (\ref{e14}) is much more general than (\ref{e13}).

\begin{theorem}\label{t11} Let $\Omega$ be a convex compact subset of $\mathbb R^n$ with $\hbox{area}(\partial\Omega)>0$. Then 
\begin{equation}
\label{eAV}
\frac{n(p-1)}{p(n-1)}\left(\frac{\Big(\frac{\hbox{area}(\partial\Omega)}{\sigma_{n-1}}\Big)^\frac{1}{n-1}}{\left(\frac{\hbox{cap}_p(\Omega{})}{\big(\frac{p-1}{n-p}\big)^{1-p}\sigma_{n-1}}\right)^\frac{1}{n-p}}\right)^\frac{n-p}{p-1}+\frac{n-p}{p(n-1)}\left(\frac{\Big(\frac{\hbox{vol}(\Omega{})}{n^{-1}\sigma_{n-1}}\Big)^\frac1n}{\left(\frac{\hbox{area}(\partial\Omega)}{\sigma_{n-1}}\right)^\frac{1}{n-1}}\right)^n\le1\ \ \forall\ \ p\in (1,n)
\end{equation}
holds with equality if and only if $\Omega$ is a ball. Consequently
\begin{equation}
\label{e14}
\left(\frac{\hbox{area}(\partial\Omega)}{\sigma_{n-1}}\right)^\frac{n-p}{n-1}\le\left(\frac{\hbox{cap}_p(\Omega)}{\Big(\frac{p-1}{n-p}\Big)^{1-p}\sigma_{n-1}}\right)\left(\frac{p(n-1)}{n(p-1)}\right)^{p-1}\ \forall\ p\in (1,n),
\end{equation}
which is asymptotically optimal in the sense that if $p\to 1$ or $p\to n$ in \eqref{e14} then
\begin{equation}
\label{e15}
\hbox{area}(\partial\Omega)=\hbox{cap}_1(\Omega{})\quad\hbox{or}\quad 1=1.
\end{equation}
\end{theorem}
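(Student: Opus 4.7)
The strategy is to analyze the level sets of the $p$-equilibrium potential $u$ of $\Omega$, namely the unique $p$-harmonic function on $\mathbb{R}^n\setminus\Omega$ with $u\equiv 1$ on $\partial\Omega$ and $u(x)\to 0$ as $|x|\to\infty$. Since $\Omega$ is convex, the Lewis convexity theorem guarantees that each super-level set $\Omega_t:=\{u\ge t\}$ is convex (and smooth for a.e.\ $t$); the $p$-Laplace flux identity $\int_{\partial\Omega_t}|\nabla u|^{p-1}\,d\sigma=\hbox{cap}_p(\Omega)$, together with the observation that $\min(u/t,\,1)$ is the $p$-equilibrium potential of $\Omega_t$, yields $\hbox{cap}_p(\Omega_t)=t^{1-p}\hbox{cap}_p(\Omega)$. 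Writing $V(t):=\hbox{vol}(\Omega_t)$, $A(t):=\hbox{area}(\partial\Omega_t)$, and the three effective radii $x(t):=(A(t)/\sigma_{n-1})^{1/(n-1)}$, $y(t):=(nV(t)/\sigma_{n-1})^{1/n}$, $z(t):=t^{-(p-1)/(n-p)}z(1)$ (with $z(1)$ the capacity radius appearing in \eqref{eAV}), the plan is to prove that
$$\mathcal F(t):=\tfrac{n(p-1)}{p(n-1)}\Bigl(\tfrac{x(t)}{z(t)}\Bigr)^{(n-p)/(p-1)}+\tfrac{n-p}{p(n-1)}\Bigl(\tfrac{y(t)}{x(t)}\Bigr)^{n}$$
is non-increasing on $(0,1]$, with $\mathcal F(t)\to 1$ as $t\downarrow 0$ (because $u$ is asymptotic to the fundamental solution at infinity, so $\Omega_t$ becomes asymptotically a large ball and the three radii coincide in the limit). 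These two facts together give $\mathcal F(1)\le 1$, which is precisely \eqref{eAV}.

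The monotonicity $\mathcal F'(t)\le 0$ is driven by two classical inputs. First, H\"older's inequality on $\partial\Omega_t$ applied to the splitting $1=|\nabla u|^{(p-1)/p}\cdot|\nabla u|^{-(p-1)/p}$, combined with the flux identity above and the coarea formula $-V'(t)=\int_{\partial\Omega_t}|\nabla u|^{-1}\,d\sigma$, produces the capacity--area bound $A(t)^{p}\le\hbox{cap}_p(\Omega)\,(-V'(t))^{p-1}$. Second, the classical isoperimetric inequality on the convex body $\Omega_t$ gives $A(t)\ge n^{(n-1)/n}\sigma_{n-1}^{1/n}\,V(t)^{(n-1)/n}$. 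Differentiating $\mathcal F$ via the scaling $z(t)=t^{-(p-1)/(n-p)}z(1)$ and substituting these bounds, the desired sign reduces to a weighted AM--GM inequality of the form $\tfrac{n(p-1)}{p(n-1)}\xi+\tfrac{n-p}{p(n-1)}\eta\ge\xi^{n(p-1)/(p(n-1))}\eta^{(n-p)/(p(n-1))}$ applied to nonnegative $\xi,\eta$ built from the ratios $(x/z)^{(n-p)/(p-1)}$ and $(y/x)^n$. The particular weights in \eqref{eAV} are precisely those that close this algebraic step, and this is the \emph{minimizing technique} referenced in Section \ref{s0}.

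To finish, equality in \eqref{eAV} is traced back through this chain: H\"older-equality forces $|\nabla u|$ to be constant on each $\partial\Omega_t$, while equality in the convex isoperimetric inequality forces each $\Omega_t$ to be a ball, so $\Omega$ itself must be a ball. Inequality \eqref{e14} follows from \eqref{eAV} by discarding the (nonnegative) second summand and rearranging, while the asymptotics \eqref{e15} follow by continuity using property (e) of Section \ref{s0} together with $(p(n-1)/(n(p-1)))^{p-1}\to 1$ in both regimes $p\to 1$ and $p\to n$. The hardest point will be the verification of $\mathcal F'(t)\le 0$: neither $x\le z$ nor $z\le x$ holds universally (flat convex bodies satisfy $x>z$ while balls give $x=z$), so no term-by-term comparison is available and the two coupled inequalities must be used jointly, with exactly the sharp weights that the conjectured constants dictate.
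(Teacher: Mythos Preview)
Your monotonicity route is genuinely different from the paper's argument, and as written it has a real gap. When you differentiate $\mathcal F$, the area radius $x(t)$ enters through $x'/x$, and neither of your two declared inputs controls $A'(t)$. Concretely, set $a=(x/z)^{(n-p)/(p-1)}$, $b=(y/x)^n$, $\alpha=\tfrac{n(p-1)}{p(n-1)}$, $\beta=\tfrac{n-p}{p(n-1)}$; then
\[
\mathcal F'=\frac{\alpha a}{t}+\beta n\,(a-b)\,\frac{x'}{x}+\beta n\,b\,\frac{y'}{y}.
\]
Your H\"older bound, rewritten in radii, gives exactly $-y'/y\ge \tfrac{p-1}{n-p}\cdot\tfrac{a}{bt}$, hence $\beta n\,b\,y'/y\le-\alpha a/t$ (using $\beta n\tfrac{p-1}{n-p}=\alpha$). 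So the first and third terms cancel and you are left with
\[
\mathcal F'\le \beta n\,(a-b)\,\frac{x'}{x}.
\]
Since $x'\le 0$ by convexity, concluding $\mathcal F'\le0$ would require $a\ge b$, i.e.\ $(x/z)^{(n-p)/(p-1)}\ge(y/x)^n$, which is not a known a priori inequality for convex bodies and is essentially part of what you are trying to prove. The AM--GM you invoke yields only the lower bound $\mathcal F\ge (y/z)^{n(n-p)/(p(n-1))}$, which is $\le 1$ by isocapacitary and hence says nothing about the sign of $\mathcal F'$.

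The paper avoids $A'(t)$ entirely. It uses area monotonicity only in the crude form $A(t)\ge A(1)=\hbox{area}(\partial\Omega)$, feeds this into the same H\"older step to obtain the \emph{constant}-coefficient differential inequality
\[
-V'(t)\ge \big(\hbox{area}(\partial\Omega)\big)^{p/(p-1)}\big(\hbox{cap}_p(\Omega)\big)^{-1/(p-1)},
\]
and integrates over $(t,1)$. The missing ingredient in your plan is then to bound $V(t)$ from \emph{above} by the Poincar\'e--Maz'ya isocapacitary inequality applied to the convex set $\Omega_t$, using $\hbox{cap}_p(\Omega_t)=t^{1-p}\hbox{cap}_p(\Omega)$; this replaces the level-set volume by an explicit power of $t$. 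One is left with an inequality $\phi(t)\le0$ for a single explicit function of $t$, and the ``minimizing technique'' is nothing more than evaluating at the critical point $t_0$ of $\phi$; the isoperimetric inequality enters only here, to verify that $t_0$ lies in the admissible range. Substituting $t_0$ gives \eqref{eAV}. Thus the paper's argument is an optimization over $t$ of an integrated inequality, not a monotonicity of $\mathcal F$ along the flow.
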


\begin{proof} First of all, since $\hbox{area}(\partial\Omega)>0$ and $\Omega$ is convex, it follows from \cite{LXZ} that $\hbox{cap}_1(\Omega)=\hbox{area}(\partial\Omega)>0$. In accordance with \cite[Theorem 3.2]{Xu}, if $1\le p_1<p_2<n$ then there is a constant $c(p_1,p_2,n)>0$ depending only on $(p_1,p_2,n)$ such that 
$$
\big(\hbox{cap}_{p_1}(\Omega)\big)^\frac{1}{n-p_1}\le c(p_1,p_2,n)\big(\hbox{cap}_{p_2}(\Omega)\big)^\frac{1}{n-p_2}.
$$
Upon choosing $p_1=1<p_2=p<n$, one gets $\hbox{cap}_p(\Omega)>0$.

Next, we verify \eqref{eAV} through considering two situations.

{\it Situation 1}: suppose that the interior $\Omega^\circ$ of $\Omega$ is not empty and the boundary $\partial\Omega$ of $\Omega$ is of $C^1$-smoothness. In accordance with \cite{CS,Lewis}, there is a unique $(1,n)\ni p$-equilibrium potential $u$ of $\Omega$ (not only smooth in $\Omega^c=\mathbb R^n\setminus\Omega$ but also continuous in $\mathbb R^n\setminus\Omega^\circ$) such that:
\begin{itemize}
\item
$\hbox{div}(|\nabla u|^{p-2}\nabla u)=0$ {in} $\Omega^c;$

\item $u|_{\partial\Omega}=1;$

\item $\lim_{|x|\to\infty}u(x)=0;$

\item $0<u<1$ in $\Omega^c$;

\item $|\nabla u|\not=0$ in ${\Omega}^c$;

\item 
$$
\hbox{cap}_p(\Omega)=\int_{\mathbb R^n\setminus{\Omega}}|\nabla u|^p\,d\nu =\int_{\{x\in\mathbb R^n:\ u(x)=t\}}|\nabla u|^{p-1}\,d\sigma\quad\forall\quad t\in (0,1);
$$

\item if $u$ is set to be $1$ on $\Omega$ then $\{x\in\mathbb R^n:\ u(x)\ge t\}$ is convex and $\{x\in\mathbb R^n:\ u(x)=t\}$ is smooth for any $t\in (0,1)$.
\end{itemize}
Consequently, we can utilize the well-known monotonicity for the area function of convex domains, the H\"older inequality and the co-area formula to get 
\begin{align*}
&\hbox{area}(\partial\Omega)\\
&\le\hbox{area}(\{x\in\mathbb R^n:\ u(x)=t\})\\
&=\int_{\{x\in\mathbb R^n:\ u(x)=t\}}\,d\sigma\\
&\le\left(\int_{\{x\in\mathbb R^n:\ u(x)=t\}}|\nabla u|^{p-1}\,d\sigma\right)^\frac1p\left(\int_{\{x\in\mathbb R^n:\ u(x)=t\}}|\nabla u|^{-1}\,d\sigma\right)^\frac{p-1}p\\
&=\big(\hbox{cap}_p(\Omega)\big)^\frac1p \left(-\frac{d}{dt}\hbox{vol}\big(\{x\in\mathbb R^n:\ u(x)\ge t\}\big)\right)^\frac{p-1}{p},
\end{align*}
and accordingly,
\begin{equation}
\label{sim}
\left(\frac{\hbox{area}(\partial\Omega)}{\big(\hbox{cap}_p(\Omega)\big)^\frac1p}\right)^\frac{p}{p-1}\le -\frac{d}{dt}\hbox{vol}\big(\{x\in\mathbb R^n:\ u(x)\ge t\}\big),
\end{equation}
where 
$$
\hbox{vol}\big(\{x\in\mathbb R^n:\ u(x)\ge t\}\big)
$$
is the Lebesgue measure of the upper level set $\{x\in\mathbb R^n:\ u(x)\ge t\}$.
Recalling the Poincar\'e-Mazya isocapacitary inequality (cf. \cite{PS} for $p=2$ and \cite{Maz} for $p\in (1,n)$)
$$
\frac{\hbox{vol}\big(\{x\in\mathbb R^n:\ u(x)\ge t\}{}\big)}{n^{-1}{\sigma_{n-1}}}\le \left(\frac{\hbox{cap}_p(\{x\in\mathbb R^n:\ u(x)\ge t\}{})}{\Big(\frac{p-1}{n-p}\Big)^{1-p}\sigma_{n-1}}\right)^\frac{n}{n-p}
$$
and using (a) - the boundarization of $\hbox{cap}_p(\cdot)$ to achieve the following formula (cf. \cite{PS, PP} for $p=2$)
\begin{align*}
&\hbox{cap}_p(\{x\in\mathbb R^n:\ u(x)\ge t\})\\
&=\hbox{cap}_p(\{x\in\mathbb R^n:\ u(x)=t\})\\
&=\int_{\{x\in\mathbb R^n:\ u(x)=t\}}\Big({t^{-1}|\nabla u|}\Big)^{p-1}\,d\sigma\\ 
&=t^{1-p}\hbox{cap}_p(\Omega),
\end{align*}
we obtain via integrating both sides of \eqref{sim} over the interval $(t,1)$
\begin{align*}
&(1-t)\left(\frac{\hbox{area}(\partial\Omega)}{\Big(\hbox{cap}_p(\Omega)\Big)^\frac1p}\right)^\frac{p}{p-1}\\
&\le\hbox{vol}\big(\{x\in\mathbb R^n:\ u(x)\ge t\}\big)-\hbox{vol}\big(\Omega)\\
&\le\Big(\frac{\sigma_{n-1}}{n}\Big)\left(\frac{\hbox{cap}_p(\{x\in\mathbb R^n:\ u(x)\ge t\})}{\Big(\frac{p-1}{n-p}\Big)^{1-p}\sigma_{n-1}}\right)^\frac{n}{n-p}-\hbox{vol}\big(\Omega)\\
&=\Big(\frac{\sigma_{n-1}}{n}\Big)\left(\frac{t^{1-p}\hbox{cap}_p(\Omega{})}{\Big(\frac{p-1}{n-p}\Big)^{1-p}\sigma_{n-1}}\right)^\frac{n}{n-p}-\hbox{vol}(\Omega).
\end{align*}
Note that the above estimate is valid for any $t\in [0,1]$. But if 
$$
 t\in\left[1,\ \left(\frac{\Big(\frac{\hbox{vol}(\Omega)}{n^{-1}\sigma_{n-1}}\Big)^\frac1n}{\left(\frac{\hbox{cap}_p(\Omega{})}{\big(\frac{p-1}{n-p}\big)^{1-p}\sigma_{n-1}}\right)^\frac{1}{n-p}}\right)^\frac{n-p}{1-p}\right]
$$
then 
$$
(1-t)\left(\frac{\hbox{area}(\partial\Omega)}{\Big(\hbox{cap}_p(\Omega)\Big)^\frac1p}\right)^\frac{p}{p-1}\le 0\le \Big(\frac{\sigma_{n-1}}{n}\Big)\left(\frac{t^{1-p}\hbox{cap}_p(\Omega{})}{\Big(\frac{p-1}{n-p}\Big)^{1-p}\sigma_{n-1}}\right)^\frac{n}{n-p}-\hbox{vol}(\Omega)
$$
and hence one has:
$$
(1-t)\left(\frac{\hbox{area}(\partial\Omega)}{\Big(\hbox{cap}_p(\Omega)\Big)^\frac1p}\right)^\frac{p}{p-1}\le\Big(\frac{\sigma_{n-1}}{n}\Big)\left(\frac{t^{1-p}\hbox{cap}_p(\Omega{})}{\Big(\frac{p-1}{n-p}\Big)^{1-p}\sigma_{n-1}}\right)^\frac{n}{n-p}-\hbox{vol}(\Omega)\ \ \ \forall\ \ \ t\in\left[0,\ \left(\frac{\Big(\frac{\hbox{vol}(\Omega)}{n^{-1}\sigma_{n-1}}\Big)^\frac1n}{\left(\frac{\hbox{cap}_p(\Omega{})}{\big(\frac{p-1}{n-p}\big)^{1-p}\sigma_{n-1}}\right)^\frac{1}{n-p}}\right)^\frac{n-p}{1-p}\right].
$$
Suppose $t_0$ is the critical point of the following function
$$
t\mapsto\phi(t)= (1-t)\left(\frac{\hbox{area}(\partial\Omega)}{\Big(\hbox{cap}_p(\Omega)\Big)^\frac1p}\right)^\frac{p}{p-1}-\Big(\frac{\sigma_{n-1}}{n}\Big)\left(\frac{t^{1-p}\hbox{cap}_p(\Omega{})}{\Big(\frac{p-1}{n-p}\Big)^{1-p}\sigma_{n-1}}\right)^\frac{n}{n-p}+\hbox{vol}(\Omega).
$$
Then solving $\phi'(t_0)=0$ and using the classical isoperimetric inequality one gets
$$
t_0=\left(\frac{\Big(\frac{\hbox{area}(\partial\Omega)}{\sigma_{n-1}}\Big)^\frac{1}{n-1}}{\left(\frac{\hbox{cap}_p(\Omega{})}{\big(\frac{p-1}{n-p}\big)^{1-p}\sigma_{n-1}}\right)^\frac{1}{n-p}}\right)^\frac{n-p}{1-p}\le \left(\frac{\Big(\frac{\hbox{vol}(\Omega)}{n^{-1}\sigma_{n-1}}\Big)^\frac1n}{\left(\frac{\hbox{cap}_p(\Omega{})}{\big(\frac{p-1}{n-p}\big)^{1-p}\sigma_{n-1}}\right)^\frac{1}{n-p}}\right)^\frac{n-p}{1-p},
$$
whence deriving
$$
(1-t_0)\left(\frac{\hbox{area}(\partial\Omega)}{\Big(\hbox{cap}_p(\Omega)\Big)^\frac1p}\right)^\frac{p}{p-1}\le\Big(\frac{\sigma_{n-1}}{n}\Big)\left(\frac{t_0^{1-p}\hbox{cap}_p(\Omega{})}{\Big(\frac{p-1}{n-p}\Big)^{1-p}\sigma_{n-1}}\right)^\frac{n}{n-p}-\hbox{vol}(\Omega),
$$
which implies 
$$
\frac{\hbox{vol}(\Omega)}{n^{-1}\sigma_{n-1}}\le\left(\frac{\hbox{area}(\partial\Omega)}{\sigma_{n-1}}\right)^\frac{n}{n-1}-\left(\frac{1-t_0}{n^{-1}\sigma_{n-1}}\right)\left(\frac{\hbox{area}(\partial\Omega)}{\big(\hbox{cap}_p(\Omega)\big)^\frac1p}\right)^\frac{p}{p-1},
$$
namely,
$$
1-t_0\le \Big(\frac{n-p}{n(p-1)}\Big)t_0\left(1-\frac{\Big(\frac{\hbox{vol}(\Omega{})}{n^{-1}\sigma_{n-1}}\Big)}{\left(\frac{\hbox{area}(\partial\Omega)}{\sigma_{n-1}}\right)^\frac{n}{n-1}}\right),
$$
and then \eqref{eAV} via a further computation with $t_0$. 

{\it Situation 2}: suppose that $\Omega$ is a general convex compact subset of $\mathbb R^n$. For this setting there is a sequence of convex compact sets $(\Omega_j)_{j=1}^\infty$ such that $\Omega_j^\circ\not=\emptyset$, $\partial\Omega_j$ is of $C^1$-smoothness, and $\Omega_j$ decreases to $\Omega$. Since (\ref{eAV}) and (\ref{e14}) are valid for $\Omega_j$, an application of the continuity for $\hbox{area}(\cdot{})$, $\hbox{vol}(\cdot{})$, and $\hbox{cap}_p(\cdot{})$ acting on convex compact sets ensures that \eqref{eAV} is true for such $\Omega$. 

After that, we check the equality case of \eqref{eAV}. If $\Omega$ is a ball, then an application of both (d) and the identity
$$
\frac{n(p-1)}{p(n-1)}+\frac{n-p}{p(n-1)}=1
$$
makes equality of \eqref{eAV} happen. Conversely, if equality of \eqref{eAV} occurs for all $p\in (1,n)$, then
$$
\frac{n(p-1)}{p(n-1)}\left(\frac{\Big(\frac{\hbox{area}(\partial\Omega)}{\sigma_{n-1}}\Big)^\frac{1}{n-1}}{\left(\frac{\hbox{cap}_p(\Omega{})}{\big(\frac{p-1}{n-p}\big)^{1-p}\sigma_{n-1}}\right)^\frac{1}{n-p}}\right)^\frac{n-p}{p-1}+\frac{n-p}{p(n-1)}\left(\frac{\Big(\frac{\hbox{vol}(\Omega{})}{n^{-1}\sigma_{n-1}}\Big)^\frac1n}{\left(\frac{\hbox{area}(\partial\Omega)}{\sigma_{n-1}}\right)^\frac{1}{n-1}}\right)^n=1\ \ \forall\ \ p\in (1,n).
$$
Upon letting $p\to 1$ in this last equality and using the known fact that (cf. \cite{Mey, LXZ})
$$
\liminf_{p\to 1}\hbox{cap}_p(\Omega)=\hbox{cap}_1(\Omega)=\hbox{area}(\partial\Omega)$$
we obtain
$$
{\left(\frac{\hbox{vol}(\Omega{})}{n^{-1}\sigma_{n-1}}\right)^\frac1n}={\left(\frac{\hbox{area}(\partial\Omega)}{\sigma_{n-1}}\right)^\frac{1}{n-1}},
$$
namely, equality of the isoperimetric inequality holds for $\Omega$, thereby finding that $\Omega$ is a ball.

Finally, let us deal with \eqref{e14} and its limiting cases. Note that the second term of the left-hand-side of \eqref{eAV} is non-negative. So, \eqref{e14} follows immediately from \eqref{eAV}. Moreover, the first identity of (\ref{e15}), as the limit case $p\to 1$ of (\ref{e14}), is well-known; see also \cite{LXZ}, \cite{GH} and \cite[Lemma 2.2.5]{Maz}. To see the second identity of \eqref{e15}, let $B(0,R_0)$ be an origin-symmetric ball containing $\Omega$. Using \eqref{e14} and (b)\&(d) we find
$$
1=\liminf_{p\to n}\left(\frac{\hbox{area}(\partial\Omega)}{\sigma_{n-1}}\right)^\frac{n-p}{n-1}\le\liminf_{p\to n}\left(\frac{\hbox{cap}_p(\Omega)}{\Big(\frac{p-1}{n-p}\Big)^{1-p}\sigma_{n-1}}\right)\left(\frac{p(n-1)}{n(p-1)}\right)^{p-1}\le\liminf_{p\to n}R_0^{n-p}=1,
$$
as desired.
\end{proof}

\begin{remark}
\label{r23} Below are two comments on (\ref{e14}) of independent interest:

{\rm (i)} In accordance with \cite[Proposition 1.1]{J}, if $\Omega$ is a convex compact subset of $\mathbb R^{n\ge 3}$ with $\Omega^\circ\not=\emptyset$ and smooth $\partial\Omega$, and  $u$ is the $p=2$-equllibrium potential of $\Omega$, then an application of the fact that
$$
x\mapsto v(x)=\int_{\partial\Omega}|x-y|^{2-n}\,\frac{d\sigma(y)}{(n-2)\sigma_{n-1}}
$$
is harmonic in $\mathbb R^n\setminus\partial\Omega$ (cf. \cite{MR}) gives
\begin{equation*}
v(x)=v_\infty \big((n-2)\sigma_{n-1})\big)^{-1}|x|^{2-n}+\mathcal{O}(|x|^{1-n})\quad\hbox{as}\quad |x|\to\infty,
\end{equation*}
where
$$
v_\infty=\int_{\partial\Omega}v|\nabla u|\,d\sigma.
$$
Note that (cf. \cite{MR})
$$
v(x)=\big((n-2)\sigma_{n-1})\big)^{-1}\hbox{area}(\partial\Omega{})|x|^{2-n}+\mathcal{O}(|x|^{1-n})\quad\hbox{as}\quad |x|\to\infty.
$$
So, one has
\begin{align}\label{infty}
&\big((n-2)\sigma_{n-1})\big)^{-1}\hbox{area}(\partial\Omega)\nonumber\\
&=v_\infty\nonumber\\
&=\int_{\partial\Omega}v|\nabla u|\,d\sigma\\
&\le\big(\max_{x\in\partial\Omega}v(x)\big)\int_{\partial\Omega}|\nabla u|\,d\sigma\nonumber\\
&=\big(\max_{x\in\partial\Omega}v(x)\big)\hbox{cap}_2(\Omega{}).\nonumber
\end{align}
Using the well-known layer-cake formula under $d\sigma$, one finds
\begin{align*}
&v(x)\Big((n-2)\sigma_{n-1})\Big)\\
&=\int_0^\infty\sigma\big(\{y\in\partial\Omega:\ |x-y|^{2-n}\ge t\}{}\big)\,dt\\
&=\left(\int_0^r+\int_r^\infty\right)\sigma{\small }\big(\{y\in\partial\Omega:\ |x-y|^{2-n}\ge t\}{}\big)\,dt\\
&\le \hbox{area}\big(\partial\Omega{}\big)r+(n-2)\sigma_{n-1} r^\frac1{2-n}.
\end{align*}
Minimizing the last quantity, one gets that
$$
r=\left(\frac{\hbox{area}(\partial\Omega)}{\sigma_{n-1}}\right)^\frac{2-n}{n-1}
$$
derives
\begin{equation}
\label{e23}
\int_{\partial\Omega}|x-y|^{2-n}\,\frac{d\sigma(y)}{\sigma_{n-1}}\le (n-1)\left(\frac{\hbox{area}(\partial\Omega)}{\sigma_{n-1}}\right)^\frac{1}{n-1}.
\end{equation}
This \eqref{e23}, along with \eqref{infty}, yields

\begin{equation}
\label{e24}
\left(\frac{\hbox{area}(\partial\Omega)}{\sigma_{n-1}}\right)^\frac{n-2}{n-1}\le(n-1)\left(\frac{\hbox{cap}_2(\Omega{})}{(n-2)\sigma_{n-1}}\right).
\end{equation}
The inequality (\ref{e24}) is weaker than the case $p=2$ of (\ref{e14}). However, (\ref{e24}) can be strengthened upon demonstrating the following conjecture
\begin{equation}
\label{e25}
\int_{\partial\Omega}|x-y|^{2-n}\,\frac{d\sigma(y)}{\sigma_{n-1}}\le \left(\frac{\hbox{area}(\partial\Omega)}{\sigma_{n-1}}\right)^\frac{1}{n-1}\quad\forall\quad x\in\partial\Omega,
\end{equation}
with equality if and only if $\Omega$ is a ball; see \cite[p.249,(4)]{LL}, \cite{MR} and \cite{GK} for some information related to \eqref{e25}.

{\rm (ii)} The higher dimensional extension of the variational principle presented in \cite[Theorem 1.1]{Ra} derives that if $\Omega$ is a convex compact subset of $\mathbb R^n$ with $\Omega^\circ\not=\emptyset$ and smooth $\partial\Omega$ then
\begin{equation}\label{e26}
\frac{(n-2)\sigma_{n-1}}{\hbox{cap}_2(\Omega{})}\le\frac{\int_{\partial\Omega}\int_{\partial\Omega}|x-y|^{2-n}\,d\sigma(x)d\sigma(y)}{\big(\hbox{area}(\partial\Omega)\big)^2}.
\end{equation}
A combination of (\ref{e23}) and (\ref{e26}) gives (\ref{e24}).
\end{remark}

\section{$p$-capacity to surface-area}\label{s3}
\setcounter{equation}{0}

From \cite[(5)]{P} it follows that if $n=3$ and $\Omega$ is a convex compact subset of $\mathbb R^n$ with smooth boundary $\partial\Omega$ and its mean curvature $H(\partial\Omega,\cdot)>0$ then one has the following P\'olya-Szeg\"o inequality for the electrostatic capacity and the mean radius: 
\begin{equation}
\label{e21}
\frac{\hbox{cap}_2(\Omega)}{4\pi}\le \int_{\partial\Omega}H(\partial\Omega,\cdot)\,\frac{d\sigma(\cdot)}{4\pi}
\end{equation}
with equality if $\Omega$ is a ball. This result has been extended by Freire-Schwartz to any outer-minimizing $\partial\Omega$ in $\Omega^c=\mathbb R^{n\ge 3}\setminus\Omega$, i.e., $\Omega\subseteq\Lambda\Rightarrow\hbox{area}(\partial\Omega)\le\hbox{area}(\partial\Lambda)$ (cf. \cite[Theorem 2]{FS}): 

\begin{equation}\label{e22}
\frac{\hbox{cap}_2(\Omega)}{(n-2)\sigma_{n-1}}\le\int_{\partial\Omega}
H(\partial\Omega,\cdot)\,\frac{d\sigma(\cdot)}{\sigma_{n-1}}
\end{equation}
with equality if and only if $\Omega$ is a ball. As a higher dimensional star-shaped generalization of \eqref{e21}, we have the following result whose \eqref{e29} under $p=2$ is a nice parallelism of \eqref{e22} since the outer-minimizing and the star-shaped are not mutually inclusive; see also \cite{GL}, and whose \eqref{e29e} discovers an optimal relation between the variational capacity and the surface area via the Willmore functional of the mean curvature (cf. \cite[Corollary 2]{BM} for $(p,n)=(2,3)$).

\begin{theorem}\label{t21} Let $\Omega$ be a smooth, star-shaped, compact subset of $\mathbb R^n$ with $\Omega^\circ\not=\emptyset$ and $H(\partial\Omega,\cdot)>0$. Then 

\begin{equation}\label{e29}
\frac{\hbox{cap}_p(\Omega)}{\Big(\frac{p-1}{n-p}\Big)^{1-p}\sigma_{n-1}}\le\begin{cases}\int_{\partial\Omega}\Big(H(\partial\Omega,\cdot)\Big)^{p-1}\,\frac{d\sigma(\cdot)}{\sigma_{n-1}}\ \ \hbox{as}\ \ 2\le p<n;\\
\left(\int_{\partial\Omega}\Big(H(\partial\Omega,\cdot)\Big)^{q-1}\,\frac{d\sigma(\cdot)}{\sigma_{n-1}}\right)^\frac{p-1}{q-1}\left(\frac{\hbox{area}(\partial\Omega)}{\sigma_{n-1}}\right)^\frac{q-p}{q-1}\ \hbox{as}\ 1<p\le 2\le q<n,
\end{cases}
\end{equation}
where the first inequality becomes an equality if and only if $\Omega$ is a ball. Consequently
\begin{equation}
\label{e29e}
\frac{\hbox{cap}_p(\Omega)}{\Big(\frac{p-1}{n-p}\Big)^{1-p}\sigma_{n-1}}\le \left(\frac{\hbox{area}(\partial\Omega)}{\sigma_{n-1}}\right)^\frac{n-p}{n-1}\left(\int_{\partial\Omega}\Big(H(\partial\Omega,\cdot)\Big)^{n-1}\,\frac{d\sigma(\cdot)}{\sigma_{n-1}}\right)^\frac{p-1}{n-1}\quad\forall\quad p\in (1,n)
\end{equation}
holds with equality if and only if $\Omega$ is a ball. Moreover, the limit settings $p\to 1$ or $p\to n$ in \eqref{e29e} produce 
\begin{equation}\label{e29ee}
\hbox{cap}_1(\Omega)\le\hbox{area}(\partial\Omega)\quad\hbox{or}\quad 1\le\int_{\partial\Omega}\big(H(\partial\Omega,\cdot)\big)^{n-1}\,\frac{d\sigma(\cdot)}{\sigma_{n-1}}.
\end{equation}

\end{theorem}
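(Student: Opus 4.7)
\emph{Strategy.} My plan is to prove \eqref{e29} by combining a test-function upper bound for $\hbox{cap}_p(\Omega)$, built from the inverse mean curvature flow (IMCF), with a Geroch-type monotonicity for the normalized Willmore functional $Q_p(t) := \int_{\Sigma_t}\big(H(\Sigma_t,\cdot)\big)^{p-1}\,d\sigma$ along the flow. Under the assumption that $\Omega$ is smooth, star-shaped, with $H(\partial\Omega,\cdot)>0$, the classical Gerhardt--Urbas theorem furnishes a smooth global IMCF issuing from $\partial\Omega$, which can be encoded as the level sets $\Sigma_t = \{w=t\}$ of a smooth function $w:\Omega^c\to[0,\infty)$ satisfying $w|_{\partial\Omega}=0$, $w(x)\to\infty$ as $|x|\to\infty$, $|\nabla w| = \mathcal{H}$ (the unnormalized mean curvature, so $\mathcal{H}=(n-1)H$), and the exact area identity $|\Sigma_t| = e^t\,\hbox{area}(\partial\Omega)$.

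\emph{Test-function upper bound.} For any smooth decreasing $\phi:[0,\infty)\to(0,1]$ with $\phi(0)=1$ and $\phi(\infty)=0$, the function $u = \phi(w)$ on $\Omega^c$, extended by $1$ on $\Omega$, is Lipschitz on $\mathbb R^n$ and (after a harmless regularisation to make it constant in a neighbourhood of $\Omega$) admissible in the variational definition of $\hbox{cap}_p(\Omega)$. Combining $|\nabla u| = |\phi'(w)|\mathcal{H}$ with the co-area formula for $w$ gives
$$
\hbox{cap}_p(\Omega) \le (n-1)^{p-1}\int_0^\infty |\phi'(t)|^p\,Q_p(t)\,dt.
$$
Writing $\psi = -\phi'\ge 0$ with $\int_0^\infty\psi\,dt=1$, Hölder's inequality yields
$$
1 = \int_0^\infty\psi\,dt \le \Big(\int_0^\infty\psi^p Q_p\,dt\Big)^{1/p}\Big(\int_0^\infty Q_p^{-1/(p-1)}\,dt\Big)^{(p-1)/p},
$$
and the sharp choice $\psi\propto Q_p^{-1/(p-1)}$ collapses the preceding two bounds to
$$
\hbox{cap}_p(\Omega) \le (n-1)^{p-1}\Big[\int_0^\infty Q_p(t)^{-1/(p-1)}\,dt\Big]^{-(p-1)}.
$$

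\emph{Monotonicity and case split.} Along IMCF the well-known evolution equations $\partial_t\mathcal{H} = -\Delta_{\Sigma_t}(1/\mathcal{H}) - |A|^2/\mathcal{H}$ and $\partial_t\,d\sigma = d\sigma$, combined with an integration by parts on $\Sigma_t$, give
$$
\frac{d}{dt}\int_{\Sigma_t}\mathcal{H}^{p-1}\,d\sigma = -(p-1)(p-2)\int\mathcal{H}^{p-5}|\nabla\mathcal{H}|^2\,d\sigma - (p-1)\int\mathcal{H}^{p-3}|A|^2\,d\sigma + \int\mathcal{H}^{p-1}\,d\sigma.
$$
For $2\le p<n$ the gradient term has the favourable sign and the Cauchy--Schwarz bound $|A|^2\ge\mathcal{H}^2/(n-1)$ reduces this to $Q_p'(t) \le \frac{n-p}{n-1}\,Q_p(t)$, hence $Q_p(t) \le Q_p(0)\,e^{(n-p)t/(n-1)}$. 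Inserting this into the Hölder-optimal upper bound above and evaluating the elementary exponential integral reduces the right-hand side to $\big(\frac{n-p}{p-1}\big)^{p-1}Q_p(0)$, which is exactly the first case of \eqref{e29}. For $1<p\le 2\le q<n$ the gradient-squared coefficient flips sign, so I instead apply Hölder on each slice, $Q_p(t) \le Q_q(t)^{(p-1)/(q-1)}|\Sigma_t|^{(q-p)/(q-1)}$, and then combine the monotonicity for $Q_q$ (valid since $q\ge 2$) with the area identity $|\Sigma_t| = e^t\,\hbox{area}(\partial\Omega)$: a short exponent-chase shows the combined exponential rate in $Q_p$ is again $(n-p)/(n-1)$, and the same exponential integral now delivers the second case of \eqref{e29}.

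\emph{Consequences, limits, equality, obstacle.} Inequality \eqref{e29e} follows from \eqref{e29} by one further Hölder inequality on $\partial\Omega$ that interpolates $\int H^{p-1}\,d\sigma$ (for $p\ge 2$) or $\int H^{q-1}\,d\sigma$ (for $p<2$) between $\int H^{n-1}\,d\sigma$ and the area; the exponents rearrange into $(n-p)/(n-1)$ on $\hbox{area}(\partial\Omega)/\sigma_{n-1}$ and $(p-1)/(n-1)$ on the Willmore functional. The limit $p\to 1$ in \eqref{e29e} uses $\lim_{p\to 1}[(p-1)/(n-p)]^{1-p}=1$ together with $\hbox{cap}_1(\Omega)=\liminf_{p\to 1}\hbox{cap}_p(\Omega)$; the limit $p\to n$ uses the enclosing-ball comparison $\hbox{cap}_p(\Omega)/\{[(p-1)/(n-p)]^{1-p}\sigma_{n-1}\}\to 1$ already exploited in the proof of Theorem \ref{t11}. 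Equality on round spheres is immediate since $|A|^2\equiv\mathcal{H}^2/(n-1)$ and every Hölder step reduces to an identity; conversely, equality in the monotonicity forces pointwise umbilicity on $\partial\Omega$, whence $\Omega$ is a ball. The principal obstacle is the sign analysis behind the monotonicity: in the $p<2$ regime the $|\nabla\mathcal{H}|^2$-term has the wrong sign, and this is precisely why \eqref{e29} must split into two cases and why the auxiliary Hölder detour through an exponent $q\ge 2$ is unavoidable.
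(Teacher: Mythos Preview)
Your proposal is correct and follows essentially the same route as the paper: build a test function from the smooth Gerhardt--Urbas IMCF level-set function, optimize via H\"older to reach $\hbox{cap}_p(\Omega)\le (n-1)^{p-1}\big[\int_0^\infty Q_p(t)^{-1/(p-1)}\,dt\big]^{1-p}$, then use the Geroch-type differential inequality $Q_p'\le\frac{n-p}{n-1}Q_p$ for $p\ge 2$ (and the slice-wise H\"older detour through $q\ge 2$ when $1<p<2$) together with the exact area growth $|\Sigma_t|=e^t\,\hbox{area}(\partial\Omega)$. Your treatment of \eqref{e29e}, the limits $p\to 1,\,p\to n$, and the equality case via umbilicity all mirror the paper's argument; the only visible difference is notational (your $Q_p$ and unnormalized $\mathcal{H}$ versus the paper's $U_p$ and $|\nabla u|$), and your coefficient $(p-1)(p-2)$ on the $|\nabla\mathcal H|^2$-term is in fact the correct one after the integration by parts.
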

\begin{proof} First of all, recall that a classic solution of inverse mean curvature flow in $\mathbb R^n$ is a smooth collection $F: M^{n-1}\times [0,T)\mapsto \mathbb R^n$ of closed hypersurfaces evolving by
 \begin{equation}
 \label{eq1}
 \frac{\partial}{\partial t} F(x,t)=\frac{\tau(x,t)}{H(x,t)}\quad\forall\quad (x,t)\in M^{n-1}\times[0,T),
 \end{equation}
 where 
 $$
 H(x,t)=\hbox{div}\big(\tau(x,t)\big)>0\quad\hbox{and}\quad\tau(x,t)
 $$ 
 are the mean curvature and the outward unit normal vector of the embedded hypersurface $M_t=F(M^{n-1},t)$. According to Gerhardt \cite{Ger} (or Urbas \cite{Ur, U}), one has that for any smooth, closed, star-sharped, initial hypersurface of positive mean curvature, equation \eqref{eq1} has a unique smooth solution for all times and the rescaled hypersurfaces $M_t$ converge exponentially to a unique sphere as $t\to\infty$.
 
According to Moser's description (cf. \cite{Mos}) of the inverse mean curvature flow (whose weak formulation was studied in Huisken-Ilimanen's papers \cite{HI1, HI2}), we see that a level set formulation of the above parabolic evolution problem for hypersufaces in $\mathbb R^n$ with the initial hypersurface $M_0=\Sigma=\partial\Omega$ produces a non-negative smooth function $u$ in $\Omega^c$ such that:
 \begin{itemize}
 \item $\hbox{div}\Big(\frac{\nabla u}{|\nabla u|}\Big)=|\nabla u|$ {in} $\Omega^c;$
 
 \item $u|_{\partial\Omega}=0;$
 
 \item $u=t$ on $M_t=\Sigma_t;$
 
 \item $|\nabla u|\not=0$ in $\Omega^c$;
 
 \item $H(\Sigma_t,\cdot)=(n-1)^{-1}|\nabla u(\cdot)|$ {on} $\Sigma_t$; 
 
 \item $\hbox{area}(\Sigma_t)=e^t\hbox{area}(\partial\Omega)\ \forall\ t\ge 0.$
 
 \end{itemize}
 This function $u$ may be treated as a kind of $1$-equilibrium potential of $\Omega$ - more precisely - if $u_p=\exp\big(\frac{u}{1-p}\big)$ obeys 
 $\hbox{div}(|\nabla u_p|^{p-2}\nabla u_p)=0$ in $\Omega^c$ and $u_p|_{\partial\Omega}=1$  then $(1-p)\log u_p\to u$ locally uniformally in $\Omega^c$ as $p\to 1$; see \cite[Theorem 1.1]{Mos}. 
 
 According to (a) and the determination of $\hbox{pcap}(\cdot)$ in terms of the $(1,n)\ni p$-equilibrium potential of $\Omega$, we have
 \begin{equation}\label{e211a}
 \hbox{cap}_p(\Omega)=\hbox{cap}_p(\partial\Omega)\le\inf_{f}\int_{\mathbb R^n\setminus \Omega^\circ} |\nabla f|^p\,d\nu
 \end{equation}
 where the infimum is taken over all functions $f=\psi\circ g$ that have the above-described level hypersurfaces $(\Sigma_t)_{t\ge 0}$ and enjoy the property that $\psi$ is a one-variable function with $\psi(0)=0$ and $\psi(\infty)=1$ and $g$ is a non-negative function on $\mathbb R^n\setminus \Omega^\circ$ with $g|_{\partial\Omega}=0$ and $\lim_{|x|\to\infty}g(x)=\infty$. Note that the co-area formula yields
 \begin{equation*}\label{e211b}
 \int_{\mathbb R^n\setminus \Omega^\circ}|\nabla f|^p\,d\nu=\int_0^\infty|\psi'(t)|^p\left(\int_{\Sigma_t}|\nabla g|^{p-1}\,d\sigma_t\right)\,dt.
 \end{equation*}
 In the above and below, $d\sigma_t$ is the surface-area-element on $\Sigma_t$. So, upon choosing
$$
\begin{cases}
g=u;\\
U_p(t)=\int_{\Sigma_t}|\nabla u|^{p-1}\,\frac{d\sigma_t}{\sigma_{n-1}};\\
\psi(t)=V_p(t)=\frac{\int_0^t \big(U_p(s)\big)^{\frac{1}{1-p}}\,ds}{\int_0^\infty \big(U_p(s)\big)^{\frac{1}{1-p}}\,ds},
\end{cases}
$$
we utilize \eqref{e211a} to achieve
\begin{equation*}\label{e211}
\frac{\hbox{cap}_p(\Omega)}{\sigma_{n-1}}\le\int_0^\infty U_p(t)\Big|\frac{d}{dt}V_p(t)\Big|^p\,dt,
\end{equation*}
whence finding
\begin{equation}\label{e212}
\frac{\hbox{cap}_p(\Omega)}{\sigma_{n-1}}
\le\left(\int_0^\infty\big(U_p(t)\big)^{\frac{1}{1-p}}\,dt\right)^{1-p}.
\end{equation}

Next, let us work out the growth of $U_p(\cdot)$.

{\it Case 1: $p\in [2,n)$}. Under this assumption, utilizing \cite[Lemma 1.2, (ii)\&(v)]{HI2}, an integration-by-part, the inequality 
$$
\big(H(\Sigma_t,\cdot)\big)^2-(n-1)|\hbox{II}_t|^2\le 0
$$ 
with 
$$
0<H(\Sigma_t,\cdot)=(n-1)^{-1}|\nabla u|
$$ 
and 
$\hbox{II}_t$ being the mean curvature and the second fundamental form on $\Sigma_t$ respectively, the differentiation under the integral, we obtain
\begin{eqnarray*}
&&\frac{d}{dt}U_p(t)\\
&&=\frac{d}{dt}\left(\frac{(n-1)^{p-1}}{\sigma_{n-1}}\int_{\Sigma_t}\big(H(\Sigma_t,\cdot)\big)^{p-1}\,d\sigma_t\right)\\
&&=\frac{(n-1)^{p-1}}{\sigma_{n-1}}
\int_{\Sigma_t}\left((p-1)\big(H(\Sigma_t,\cdot)\big)^{p-2}\Big(\frac{d}{dt}H(\Sigma_t,\cdot)\Big)+\big(H(\Sigma_t,\cdot)\big)^{p-1}\right)\,d\sigma_t\\
&&=\frac{(n-1)^{p-1}}{\sigma_{n-1}}
\int_{\Sigma_t}\left(1-(p-1)\Big(\frac{|\hbox{II}_t|}{H(\Sigma_t,\cdot)}\Big)^2-(p-2)\big|\nabla \big(H(\Sigma_t,\cdot)\big)^{-1}\big|^2\right)\big(H(\Sigma_t,\cdot)\big)^{p-1}\,
d\sigma_t\\
&&\le\frac{n-p}{(n-1)\sigma_{n-1}}\int_{\Sigma_t}|\nabla u|^{p-1}\,d\sigma_t\\
&&=\Big(\frac{n-p}{n-1}\Big)U_p(t),
\end{eqnarray*}
whence discovering the following inequality through an integration
\begin{equation}\label{e213}
U_p(t)\le U_p(0)\exp\Big(t\big(\frac{n-p}{n-1}\big)\Big).
\end{equation}
Using (\ref{e212})-(\ref{e213}) we get
$$
\frac{\hbox{cap}_p(\Omega)}{\sigma_{n-1}}\le U_p(0)\left(\frac{(n-1)(p-1)}{n-p}\right)^{1-p}
$$
whence reaching the inequality in (\ref{e29}) under $2\le p<n$.

{\it Case 2: $1<p\le 2\le q<n$}. Under this situation, we use the H\"older inequality to achieve
$$
\int_{\Sigma_t}|\nabla u|^{p-1}\,\frac{d\sigma_t}{\sigma_{n-1}}\le\left( \int_{\Sigma_t}|\nabla u|^{q-1}\,\frac{d\sigma_t}{\sigma_{n-1}}\right)^\frac{p-1}{q-1}\left(\frac{\hbox{area}(\Sigma_t)}{\sigma_{n-1}}\right)^\frac{q-p}{q-1}.
$$
Now, employing the estimate for $q\in [2,n)$ and the definition of $U_p$, we obtain 
$$
U_p(t)\le \left(\int_{\partial\Omega}\big(H(\partial\Omega,\cdot)\big)^{q-1}\,\frac{d\sigma(\cdot)}{\sigma_{n-1}}\right)^\frac{p-1}{q-1}
\left(\frac{\hbox{area}(\Sigma_t)}{\sigma_{n-1}}\right)^\frac{q-p}{q-1}\exp\left(t\Big(\frac{n-p}{n-1}\Big)\right).
$$
Bringing this last inequality into \eqref{e212}, along with 
$$
\hbox{area}(\Sigma_t)=e^t\hbox{area}(\partial\Omega),
$$
we arrive at the second inequality of \eqref{e29}. 

{\it Case 3: equality of \eqref{e29}}. If $\Omega$ is a ball, then a direct computation gives equality of \eqref{e29}. Conversely, if the inequality $\le$ in \eqref{e29} becomes an equality, then the above-established differential inequalities for $U_p$ force
$$
\big(H(\Sigma_t,\cdot)\big)^2-(n-1)|\hbox{II}_t|^2=0\quad\hbox{on}\quad\Sigma_t,
$$ 
which in turn ensures that $\Sigma_t$ consists of the union of disjoint spheres. Since $\Sigma_t$ is generated by a smooth solution of the inverse mean curvature flow in $\mathbb R^n$, $\Sigma_t$ must be a single sphere. Consequently, $\Omega$ is a ball.

After that, \eqref{e29e} and its equality case follow from \eqref{e29} and its equality case as well as the following estimate (based on the H\"older inequality)
$$
\int_{\partial\Omega}\big(H(\partial\Omega,\cdot)\big)^{q-1}\,\frac{d\sigma(\cdot)}{\sigma_{n-1}}\le\left(\int_{\partial\Omega}\big(H(\partial\Omega,\cdot)\big)^{n-1}\,\frac{d\sigma(\cdot)}{\sigma_{n-1}}\right)^\frac{q-1}{n-1}\left(\frac{\hbox{area}(\partial\Omega)}{\sigma_{n-1}}\right)^\frac{n-q}{n-1}\quad\forall\quad q\in (1,n).
$$

Finally, let us check \eqref{e29ee}. On the one hand, letting $p\to 1$ in \eqref{e29e} yields the Mazya inequality (cf. \cite[p.149, Lemma 2.2.5]{Maz}):
$$
cap_1(\Omega)\le \hbox{area}(\partial\Omega).
$$
On the other hand, choosing $0<r<R$ with
$B(x_0,r)\subseteq \Omega\subseteq B(x_0,R)$, we utilize the properties (b)\&(d) of $\hbox{cap}_p(\cdot)$ to derive
$$
r^{n-p}\le \frac{\hbox{cap}_p(\Omega)}{\Big(\frac{p-1}{n-p}\Big)^{1-p}\sigma_{n-1}}\le R^{n-p},
$$
whence achieving
$$
\lim_{p\to n}\frac{\hbox{cap}_p(\Omega)}{\Big(\frac{p-1}{n-p}\Big)^{1-p}\sigma_{n-1}}=1.
$$
This, together with letting $p\to n$ in \eqref{e29e}, derives the Willmore inequality (cf. \cite[p. 87]{RS} or \cite{chen} for immersed hypersurfaces in $\mathbb R^n$):
$$
1\le\int_{\partial\Omega}\big(H(\partial\Omega,\cdot)\big)^{n-1}\,\frac{d\sigma(\cdot)}{\sigma_{n-1}}.
$$
\end{proof}

\begin{remark}
\label{r1} Two comments are in order:

{\rm (i)} Let $\Omega$ be a smooth compact subset of $\mathbb R^n$ with $\Omega^\circ\not=\emptyset$ and $H(\partial\Omega,\cdot)>0$. If $\partial\Omega$ is outer-minimizing, then one has the $(1,n)\ni p$-Aleksandrov-Fenchel inequality:
\begin{equation}\label{e210}
\left(\frac{\hbox{area}(\partial\Omega)}{\sigma_{n-1}}\right)^\frac{n-p}{n-1}\le\begin{cases}
\int_{\partial\Omega}\big(H(\partial\Omega,\cdot)\big)^{p-1}\,\frac{d\sigma(\cdot)}{\sigma_{n-1}}\ \ \hbox{as}\ \ 2\le p<n;\\
{\left(\int_{\partial\Omega}\Big(H(\partial\Omega,\cdot)\Big)^{q-1}\,\frac{d\sigma(\cdot)}{\sigma_{n-1}}\right)^\frac{p-1}{q-1}}{\left(\frac{\hbox{area}(\partial\Omega)}{\sigma_{n-1}}\right)^\frac{q-p}{q-1}}\ \ \hbox{as}\ \ 1<p\le 2\le q<n,
\end{cases}
\end{equation}
where the first inequality becomes an equality if and only if $\Omega$ is a ball.

In fact, using the known $2$-Aleksandrov-Fenchel inequality (cf. \cite[Theorem 2(b)]{FS})
\begin{equation}
\label{AF}
\left(\frac{\hbox{area}(\partial\Omega)}{\sigma_{n-1}}\right)^\frac{n-2}{n-1}\le
\int_{\partial\Omega}H(\partial\Omega,\cdot)\,\frac{d\sigma(\cdot)}{\sigma_{n-1}}
\end{equation}
and the H\"older inequality, we gain
$$
\int_{\partial\Omega}H(\partial\Omega,\cdot)\,\frac{d\sigma(\cdot)}{\sigma_{n-1}}\le \left(\int_{\partial\Omega}\big(H(\partial\Omega,\cdot)\big)^{p-1}\,\frac{d\sigma(\cdot)}{\sigma_{n-1}}\right)^\frac1{p-1}\left(\frac{\hbox{area}(\partial\Omega)}{\sigma_{n-1}}\right)^{\frac{p-2}{p-1}}\quad\forall\quad p\in [2,n),
$$
whence implying \eqref{e210}. If the first inequality of \eqref{e210} becomes equality, then equality of \eqref{AF} is valid, and hence $\Omega$ is a ball. Of course, the converse follows from a direct computation.

{\rm (ii)} An application of \eqref{e22}, \eqref{e210} and the H\"older inequality derives that if $\Omega\subset\mathbb R^{n\ge 3}$ is a smooth compact set with $\Omega^\circ\not=\emptyset$ and $\partial\Omega$ being outer-minimizing as well as having $H(\partial\Omega,\cdot)>0$ then one has the following log-convexity type inequality for the electrostatic capacity, the surface area and the Willmore functional:
\begin{equation}\label{e20aa}
{\frac{\hbox{cap}_2(\Omega{})}{(n-2)\sigma_{n-1}}}\le{\left(\frac{\hbox{area}(\partial\Omega)}{\sigma_{n-1}}\right)^\frac{n-2}{n-1}}
\left(\int_{\partial\Omega}\big(H(\partial\Omega,\cdot)\big)^{n-1}\,\frac{d\sigma(\cdot)}{\sigma_{n-1}}\right)^\frac1{n-1}
\end{equation}
with equality if and only if $\Omega$ is a ball. Interestingly and naturally, \eqref{e20aa} and \eqref{e29e} under $p=2$ complement each other thanks to the relative independence between the outer-minimizing and the star-shaped.  
\end{remark}

\end{document}